\newtheorem{theorem}{Theorem}[section]
\newtheorem{lemma}[theorem]{Lemma}
\newtheorem{prop}[theorem]{Proposition}
\newtheorem{cor}[theorem]{Corollary}
\theoremstyle{definition}
\theoremstyle{remark}
\newtheorem{remark}[theorem]{Remark}
\numberwithin{equation}{section}
\newcommand{\rr}{{\mathbb R}}
\newcommand{\N}{{\mathbb N}}
\renewcommand{\P}{\mathbb{P}}
\renewcommand{\P}{\mathbb{P}}
\newcommand{\Pcal}{\mathcal{P}}
\newcommand{\E}{\mathbb{E}}
\newcommand{\1}{\mathbf{1}}
\begin{document}
\sloppy
\title[The longest edge of the 1D soft RGG with boundaries]{The longest edge of the one-dimensional soft random geometric graph with boundaries}

\author{Arnaud Rousselle}
\address{Arnaud Rousselle, Institut de Mathématiques de Bourgogne, UMR 5584, CNRS, Université de Bourgogne, F-21000 Dijon, France}
\email{arnaud.rousselle@u-bourgogne.fr}

\author{Ercan S\"onmez}
\address{Ercan S\"onmez, Department of Statistics, University of Klagenfurt, Universit\"atsstraße 65--67, 9020 Klagenfurt, Austria}
\email{ercan.soenmez\@@{}aau.at}

\begin{abstract}
The object of study is a soft random geometric graph with vertices given by a Poisson point process on a line and edges between vertices present with probability that has a polynomial decay in the distance between them. Various aspects of such models related to connectivity structures have been studied extensively. In this paper we study the random graph from the perspective of extreme value theory and focus on the occurrence of single long edges. The model we investigate has non-periodic boundary and is parameterized by a positive constant $\alpha$, which is the power for the polynomial decay of the probabilities determining the presence of an edge. As a main result we provide a precise description of the magnitude of the longest edge in terms of asymptotic behavior in distribution. Thereby we illustrate a crucial dependence on the power $\alpha$ and we recover a phase transition which coincides with exactly the same phases in \cite{BenBer}.
\end{abstract}

\keywords{Random graphs, extreme value theory, soft random geometric graph, maximum edge-length, Poisson approximation}
\subjclass{Primary: 05C80, 60G70; Secondary: 60F05, 05C82, 82B21.}

\maketitle

\baselineskip=18pt
\sloppy

\section{Introduction}

In this paper we consider the soft random geometric graph \cite{Penrose2}. In this graph the vertex set is given by the points of a homogeneous Poisson point process and edges are added at random between {vertices with probability depending on their distance}. This is a natural extension of the Gilbert graph \cite{gilbert}, also known as the hard random geometric graph, because an edge is added if the distance between its endpoints is below a given threshold and otherwise it is not added. Concrete applications of such random graphs are numerous and range from computer science \cite{gupta}, engineering \cite{amin}, communication theory \cite{tse}, robotic swarming \cite{von}, climate dynamics \cite{donges} and spread of diseases \cite{eubank}. In particular they proved to be relevant because of taking into account realistic features of modern networks of various types. Predominantly the investigations have been concerned with connectivity structures, including component counts, degree counts and graph distances (see \cite{devroye, last2, Penrose3,  s2} for example). Particular attention has been devoted to low-dimensional versions of the soft random geometric graph separately, very recently in dimension 2 \cite{Penrose5} and in dimension 1 \cite{wilsher1,wilsher2}. The model in dimension 1 admits an interesting application in vehicular ad-hoc networks (see \cite{wilsher2} for a detailed discussion) and will be the object of study in this paper.

\subsection*{The model}
Throughout this paper let $n\in\N$, $B_n = [-n,n]$ and let $\Pcal$ be a homogeneous Poisson point process on $\rr$ with unit intensity. We will consider $\Pcal_n := \Pcal \cap B_n$ and view it as a random (countable) subset of $B_n$. The set $\Pcal_n$ will be the vertex set of the soft random geometric graph. Given $\Pcal_n$ the edge-set is constructed as follows. Let $g\colon \rr \to [0,1]$ be a measurable function. We assume that $g$ is symmetric, i.e. $g(x) = g(-x)$ for all $x \in \rr$. Each pair of points $x,y \in \Pcal_n$ will be connected by an edge $\{x,y\}$ with probability $g(x-y)$ and independently of all other pairs of points. All such edges $\{x,y\}$ will be summarized in the edge-set $E_n$. Thus the graph is given by $(\Pcal_n, E_n)$. The function $g$ is called the connection function. Let $\alpha\in (0, \infty)$. {We will mainly consider the case that $g$ satisfies}
\begin{equation}\label{cf}
	\lim_{|x| \to \infty} \frac{g(x)}{|x|^{-\alpha}} =1.
\end{equation}
An explicit example is given by
$$g(x) = 1-\exp (-|x|^{-\alpha}), \quad x \in \rr.$$
In particular $g$ has unbounded support and polynomial decay as $|x| \to \infty$.

{We note that, unlike the model investigated in \cite{devroye} which is defined on a torus, the random graph considered in this paper has boundaries, meaning that the vertex set is defined on $[-n,n]$.}

\subsection*{Problem formulation and main results}

We focus on the length of the longest edge in the soft random geometric graph, formally defined as
\begin{equation}\label{estar}
	e_n^* := \max_{\{x,y\} \in E_n} |x-y|.
\end{equation}
We are interested in the asymptotic behavior of $e_n^*$ as $n \to \infty$. Our findings provide a precise description of the magnitude of the largest edge-length. Here is the statement of the main results, followed immediately after by discussions and interpretations that manifest their significance.


\begin{theorem}\label{main}
	For $n\in \N$ and $\alpha\in (0,\infty)$ let $(\Pcal_n, E_n)$ be the soft random geometric graph with connection function $g$ satisfying \eqref{cf}. Let $e_n^*$ given in \eqref{estar} be the length of the longest edge in this graph {and let $U$ be a $(0,1)$-uniform distributed random variable}. 
	\begin{itemize}
		\item [(I)] If $\alpha>2$ then
		{$$ \exp \left( \frac{2n}{1-\alpha}    (e_n^*)^{1-\alpha} \right) \xrightarrow[n \to \infty]{d}U.$$}
		\item [(II)] If $\alpha=2$ then
		
		{$$ h \left( \frac{e_n^*}{n}\right)^{-1}  \xrightarrow[n \to \infty]{d}U,$$}
		where $h(x)=\frac{x}{2}\exp\left(\frac{2-x}{x}\right)$ for $x\in (0,2]$.
		\item [(III)] If $1<\alpha<2$ then
		{$$\exp \left( -(2n)^{\alpha-2} 2^{-1}(2-\alpha)^{-2}  \left( (e_n^*)^{2-\alpha} - (2n)^{2-\alpha} \right)^2 \right) \xrightarrow[n \to \infty]{d}U.$$}
		\item [(IV)] If $\alpha=1$ then
		{$$\exp\left( -n \left( \ln\frac{e_n^*}{2n} \right) ^{2} \right) \xrightarrow[n \to \infty]{d}U.$$}
		\item [(V)] If $\alpha<1$ then
		{$$\exp \left( -(2n)^\alpha 2^{-1} (1-\alpha)^{-2}  \left( (e_n^*)^{1-\alpha} - (2n)^{1-\alpha} \right)^2 \right) \xrightarrow[n \to \infty]{d}U.$$}
	\end{itemize}
\end{theorem}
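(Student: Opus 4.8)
The plan is to turn the statement about the longest edge into a void probability for a suitable counting variable and then run a Poisson approximation. Writing $M_n(t)$ for the number of edges of $E_n$ whose length exceeds $t$, we have the exact identity $\P(e_n^* \le t) = \P(M_n(t) = 0)$, so everything reduces to understanding the law of $M_n(t)$ at the correct $n$-dependent threshold. First I would compute the mean $\lambda_n(t) := \E[M_n(t)]$ by the multivariate Mecke equation for the Poisson process $\Pcal_n$ combined with the independent edge marking; the change of variables $u = x-y$ and the symmetry of $g$ give
$$\lambda_n(t) = \frac12\int_{B_n}\int_{B_n}\1\{|x-y|>t\}\,g(x-y)\,\d x\,\d y = \int_t^{2n} g(u)\,(2n-u)\,\d u,$$
where the weight $(2n-u)$ is the length of the set of admissible translates inside $B_n$.

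Second, I would establish the Poisson approximation $\P(M_n(t)=0) = e^{-\lambda_n(t)} + o(1)$ whenever $\lambda_n(t)$ stays bounded. The natural tool is the Chen--Stein method (equivalently, the method of factorial moments): conditionally on $\Pcal_n$ the edge indicators are independent, so the only dependence between two long edges comes from a shared endpoint. The Chen--Stein error is therefore controlled by the expected number $S_n(t)$ of ordered pairs of distinct long edges meeting at a common vertex, which by the three-point Mecke formula is
$$S_n(t) = \frac12\int_{B_n}\left(\int_{B_n}\1\{|z-x|>t\}\,g(z-x)\,\d x\right)^2\,\d z,$$
and one checks in each regime that $S_n(t)\to 0$ at the scaling that keeps $\lambda_n(t)$ bounded.

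Third comes the asymptotic evaluation of $\lambda_n(t)$. In every regime the relevant threshold satisfies $t\to\infty$, so \eqref{cf} lets me replace $g(u)$ by $u^{-\alpha}$ at the cost of an error $\int_t^{2n}(g(u)-u^{-\alpha})(2n-u)\,\d u = o(1)$, since $\sup_{u\ge t}|g(u)u^{\alpha}-1|\to 0$. Evaluating $\int_t^{2n}u^{-\alpha}(2n-u)\,\d u$ explicitly and expanding then produces the five profiles: for $\alpha>2$ the integral concentrates near the lower endpoint and reduces to $\frac{2n}{\alpha-1}t^{1-\alpha}$ up to vanishing terms; for $\alpha\le 2$ the longest edge concentrates near the diameter $2n$, so I expand the antiderivative about $u=2n$, which yields the logarithmic profiles at $\alpha=2$ and $\alpha=1$ (for $\alpha=2$ one gets exactly $\ln h(t/n)$) and the quadratic profiles for $1<\alpha<2$ and $\alpha<1$. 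In each case this shows $\lambda_n(t)=\psi_n(t)+o(1)$, where $\psi_n$ is precisely the exponent in the corresponding item.

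Finally, for fixed $s\in(0,1)$ I set $t=t_n(s)$ to be the threshold solving $\psi_n(t)=-\ln s$; since each transformation is $\phi_n=e^{-\psi_n}$, monotone in $t$, this $t_n(s)$ is exactly $\phi_n^{-1}(s)$. Combining the three steps gives $\P(\phi_n(e_n^*)\le s)=\P(e_n^*\le t_n(s))=e^{-\lambda_n(t_n(s))}+o(1)=s+o(1)$, i.e.\ convergence in distribution to $U$. The step I expect to be the main obstacle is the Poisson approximation: the crude bound $S_n(t)\le C\,n\left(\int_{t<|w|<2n} g(w)\,\d w\right)^2$, obtained by bounding the inner integral uniformly over $z\in B_n$, \emph{fails} to vanish when $\alpha<1$. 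One must instead exploit that every long edge has both endpoints in the two boundary windows of width $\sim 2n-t$, so the number of admissible shared vertices is of order $2n-t$ rather than $2n$; this refined, regime-dependent estimate is what forces $S_n(t)\to 0$ across all five phases.
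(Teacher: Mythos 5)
Your proposal is correct and follows essentially the same route as the paper: the paper likewise reduces the problem to the exceedance count (its $W(n,r_n)$), computes its mean via the Mecke formula (Lemma~\ref{a1}), performs a Stein-type Poisson approximation whose error term counts pairs of long edges sharing an endpoint (Lemma~\ref{pr:continuousCase}, via Penrose's coupling theorem), evaluates the explicit integral $\int_t^{2n}u^{-\alpha}(2n-u)\,\d u$ regime by regime in the Appendix, and inverts the threshold to obtain the uniform limit. Your closing observation about restricting to the boundary windows when $\alpha<1$ is exactly how the paper's estimate $\mathfrak{I}(n)\leq C\,\E\left[W(n,r_n)\right]\max_{x\in B_n}\int_{B_n\setminus B_{r_n}(x)}g(y-x)\,\d y$ avoids the divergent crude bound, since the double integral factor stays $O(1)$ precisely because both endpoints of a long edge must lie near the boundary.
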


\subsection*{Interpretations and discussions}

{{Theorem \ref{main} shows that the largest edge-length after suitable transformation approximates a $(0,1)$-uniform distributed random variable. It thus follows from the well-known inversion method (see \cite[Theorem 2.1]{devb}) that the largest edge-length suitably transformed approximates an arbitrary random variable $Z$.} This enables to spot many particular examples and, additionally, illustrates significant differences stated in this way between each of the phases.

Let $\alpha>2$. The perhaps most natural {consequence of} Theorem \ref{main} is that
$$  \left( \frac{2n}{\alpha-1}  \right)^{\frac{1}{1-\alpha}}  e_n^* \xrightarrow[n \to \infty]{d}Z_{\alpha-1},$$
where $Z_{\alpha-1}$ has a Fr\'echet distribution with parameter $\alpha-1$. That is after scaling the edge-length we obtain a convergence towards a heavy-tailed extreme value distribution, as it is typically the case for maxima of heavy-tailed random variables. It is interesting to compare this statement with the case $\alpha=2$, in which Theorem \ref{main} yields the asymptotics
$$  \frac{1}{2n}e_n^* \xrightarrow[n \to \infty]{d}Z^*,$$
where now $Z^*$ is a random variable with continuous distribution function
$$\mathbb{P} (Z^* \leq r) =  r^{-1} \exp \left( \frac{r-1}{r} \right) \1_{(0,1)} (r) + \1_{[1,\infty)} (r) , \quad r\in\rr,$$
a statement we could not have guessed.

Now let $1<\alpha<2$. The most apparent example we encounter here is a Weibull convergence for a power of the maximum edge-length, namely
$$  2^{-\frac12}(2-\alpha)^{-1}( 2n)^{\frac{\alpha}{2}-1}  \left( (e_n^*)^{2-\alpha} - (2n)^{2-\alpha} \right) \xrightarrow[n \to \infty]{d}Z_{W},$$
where $Z_W$ has a Weibull distribution with parameter 2, i.e.
$$\mathbb{P} (Z_W \leq r) = \exp \left( -(-r)^2 \right) \1_{(-\infty,0]} (r) + \1_{(0,\infty)} (r) , \quad r\in\rr.$$

Another interesting comparison is the one of {\color{black} the latter} statement with an analogous statement in case $\alpha< 1$, which reads
$$  2^{-\frac12}(1-\alpha)^{-1}( 2n)^{\frac{\alpha}{2}}  \left( (e_n^*)^{1-\alpha} - (2n)^{1-\alpha} \right) \xrightarrow[n \to \infty]{d}Z_{W}.$$
{\color{black} We observe that, for $\alpha <1$, $2n-e_n^*$ converges in probability to 0. Since
\begin{align*}
    \frac{1}{1-\alpha}( 2n)^{\frac{\alpha}{2}}  \left( (e_n^*)^{1-\alpha} - (2n)^{1-\alpha} \right)&=\frac{1}{1-\alpha}( 2n)^{1-\frac{\alpha}{2}}  \left( \left(1+\frac{e_n^*-2n}{2n}\right)^{1-\alpha} - 1 \right)\\
    &=( 2n)^{1-\frac{\alpha}{2}}  \left( \left(\frac{e_n^*-2n}{2n}\right)+o\left(\frac{e_n^*-2n}{2n}\right)\right),
\end{align*}
it follows }that \begin{equation}\label{eq:CVW}2^{-\frac12}(2n)^{-\frac{\alpha}{2}}(e^*_n-2n)\xrightarrow[n \to \infty]{d} Z_W.
\end{equation}
{\color{black} A linearization argument as above also applies} in the case $\alpha = 1$ to get \eqref{eq:CVW} but fails for $1<\alpha<2$ since $2n-e_n^*$ does not vanish asymptotically in this case. 

We note that for $\alpha\leq 1$ we have
$$\int_\rr g(x) dx = \infty,$$
whereas for $\alpha>1$ it holds
$$0 < \int_\rr g(x) dx < \infty$$
and the corresponding statements illustrate a crucial change of behavior of the edge-lengths. For example, for $\alpha=\frac{3}{2}$ the assertion is
$$  2^{\frac12} ( 2n)^{-\frac{1}{4}}  \left( \sqrt{e_n^*} - \sqrt{2n} \right) \xrightarrow[n \to \infty]{d}Z_{W},$$
whereas for $\alpha=\frac12$ it is
$$  2^{\frac12}( 2n)^{\frac{1}{4}}  \left( \sqrt{e_n^*} - \sqrt{2n} \right) \xrightarrow[n \to \infty]{d}Z_{W}.$$
Notice here the crucial difference in the order of the sequences scaling the centred quantity $\sqrt{e_n^*} - \sqrt{2n}$. In particular, this shows that for $\alpha=\frac{3}{2}$ the explicit asymptotic behavior of $ \sqrt{e_n^*} - \sqrt{2n}$ can be compared to $2^{-\frac{1}{4}} Z_W n^{\frac{1}{4}}$, whereas for $\alpha=\frac12$ it can be compared to $2^{-\frac{3}{4}} Z_W n^{-\frac{1}{4}}$. We point out that, for $\alpha \leq 1$, almost surely, for each $x\in\mathcal{P}$, the degree of $x$ in $(\mathcal{P}_n, E_n)$ tends to $\infty$ as $n \to \infty$ (which is in contrast to the case $\alpha>1$). Nevertheless, Theorem~\ref{main} allows for an insight and precise description of the growth of the maximum edge-length.  

Finally, as an example of the case $\alpha=1$ one can derive the asymptotics
$$ \left(\frac{e_n^*}{2n}\right)^{\sqrt{n}} \xrightarrow[n \to \infty]{d}Z^{**},$$
where $Z^{**}$ has distribution function
$$\mathbb{P}(Z^{**} \leq r) = r^{-\ln r} \1_{(0,1)} (r) + \1_{[1,\infty)} (r) , \quad r\in\rr.$$

\subsection*{Concluding remarks}
We now give some connections to the literature and earlier works. The choice \eqref{cf} of the connection function naturally admits a comparison with long-range percolation \cite{schulman}. Recall that in classical long-range percolation the vertex set is given by $V=\mathbb{Z}^d$, $d \in \mathbb{N}$, and every pair of points $x,y \in V$ is connected by an edge with probability $p(x-y)$ and independently of all other pairs of points, where $p(x-y) \sim \|x-y\|^{-\alpha}$, as $\|x-y\| \to \infty$. Notably many works on long-range percolation originated from the pioneering paper \cite{BenBer} focusing on graph distances in a finite version of long-range percolation in $d=1$. This has later been continued in arbitrary dimension $d\geq 1$ \cite{CGS} and resulted in a prosperous research branch (see also \cite{Biskup} and references therein). The investigations on graph distances have essentially exposed a phase transition with five distinct regimes of typical behavior, namely $\alpha<d$, $\alpha=d$, $d<\alpha<2d$, $\alpha=2d$, $\alpha>2d$. We would like to point out that this information regarding distinct regimes with different behavior of long-range percolation is reflected in our findings as well, because a phrase transition with precisely the aforementioned parameter regimes appears in our discoveries about the largest edge-length.

Finally we close this section with a few remarks regarding the extension of our results to the soft random geometric graph in arbitrary dimension $d\geq 1$. {One could consider the problem discussed in this paper for the multidimensional generalization of our model. As this model has boundaries, the distribution of the largest edge-length connecting a given vertex depends on the position of this vertex, which we call boundary effects in our model. In dimension $d \geq 2$ this has the consequence that the required computations in order to mimic our results become intractable.} One way to avoid this problem is to define the soft random geometric graph on a $d$-dimensional torus (for example as in \cite{devroye}). Then our methods can be applied in a straightforward way (with essentially less lengthy calculations than the ones in the Appendix), i.e. with the methods provided in this paper one can also prove results for such a multidimensional model. Motivated by the previous remarks in such a model with the same connection function \eqref{cf} we would obtain the phase transition $\alpha<d, \alpha=d, d<\alpha<2d, \alpha=2d, \alpha>2d.$

The rest of the paper is devoted to the proof of Theorem \ref{main}.


\section{Proof of Theorem \ref{main}}

\subsection{Prerequisites}

The first aim of this subsection is to briefly give a definition of the soft random geometric graph in terms of a construction derived from a marked Poisson point process (see {\it e.g.} \cite[Section 4]{Penrose3} for more details). This fact is an essential property of the soft random geometric graph, useful for practical purposes and allows for instance to apply several convergence theorems that enable to investigate and characterize various structures exhibited in the model. This formal construction has been used in various instances, such as in the study of degree counts, component counts and number of edges \cite{Penrose3, last2}. It will permit us to apply particularly a general Poisson approximation theorem via coupling related to Stein's method taken from \cite{Penrose3}, which we previously applied in the same fashion as in the study of the longest edge in the random connection model (see \cite{rs}). We recall a particular case of this result below for the reader's convenience.

\subsubsection{A formal construction of the soft random geometric graph}\label{Sec:ConsSoftRGG} Let us provide a formal construction of the soft random geometric graph from a marked Poisson point process; see \cite{Penrose3} for {an equivalent} construction. We consider as mark space the set $\mathbb{M}=[0,1)^{\mathbb{N}_0}$ and as mark distribution $\mathbf{m}$ the distribution of an infinite sequence of $[0,1)$-uniform independent random variables. Let $n\in\mathbb{N}$ and $\mathcal{P}_n^{\text{m}}$ an independently marked Poisson point process of unit intensity on $B_n$ with marks in $\mathbb{M}=[0,1)^{\mathbb{N}_0}$ distributed according to $\mathbf{m}$. Observe that the event
\[\left\{\begin{array}{c}
     \mbox{there exist distinct marked points } (X,(T_n)_{n\geq 0})\in\mathcal{P}_n^{\text{m}}\mbox{ and }(Y,(U_n)_{n\geq 0})\in\mathcal{P}_n^{\text{m}}\\\mbox{such that }\,T_0=U_0 
\end{array}\right\}\]
has 0 probability.
Thus, almost every realization of this marked Poisson point process can be enumerated as $\{(x_i,(t_{i,0}, t_{i,1}, \dots))\}_{i=1,\dots, |\mathcal{P}_n^{\text{m}}|}$, where $t_{1,0}<t_{2,0}<\dots <t_{|\mathcal{P}_n^{\text{m}}|,0}$. This enumeration presents the advantage to give a canonical way to list the points in order to decide if they are connected or not. More precisely, we define the edge-set $E_n$ as the set of all the edges $\{x_i,x_j\}$, $1\leq i <j\leq |\mathcal{P}_n^{\text{m}}|$, such that $t_{i,j}\leq  g(x_i-x_j)$. For a set $\xi$ of marked points as above, we denote by $G(\xi)=(V(\xi),E(\xi))$ the soft random geometric graph constructed from $\xi$ as we just described. This construction will allow us to apply a general result on Poisson approximation from \cite{Penrose3} that we recall in the next subsection.

\subsubsection{A Poisson approximation theorem via coupling}

Let us now state the above mentioned result from \cite{Penrose3} in the particular case where the underlying Poisson point process is of intensity 1 in a bounded measurable subset $B$ of $\mathbb{R}$. Although, for our application, the mark space will be taken as above, it is interesting to note that it holds for an arbitrary marked space $\left(\mathbb{M},\mathcal{M},\mathbf{m}\right)$ endowed with a diffusive probability measure $\mathbf{m}$. We denote by $\mathbf{S}$ the set of all locally finite subsets of $\mathbb{R}\times\mathbb{M}$ and by $\mathbf{S}_k$ the set of subsets of $\mathbb{R}\times\mathbb{M}$ of cardinality $k$, $k\in\mathbb{N}$.

\begin{theorem}[{\cite[Theorem 3.1.]{Penrose3}}] \label{Th:Penrose18}

Let $k\in\mathbb{N}$, $f\colon \mathbf{S}_k\times \mathbf{S} \longrightarrow \{0,1\}$ a measurable function and for $\xi\in \mathbf{S}$ set
\begin{equation} \label{coupf}
	F(\xi):=\sum_{\psi\in\mathbf{S}_k:\psi\subset\xi}f(\psi,\xi\setminus \psi).
\end{equation}

Let $B$ be a bounded measurable subset of $\mathbb{R}$ and let $\mathcal{P}^{\text{m}}_B$ be a marked Poisson point process with intensity $\operatorname{Leb}(\cdot)\times \mathbf{m}$ in $B\times \mathbb{M}$ and set $W:=F(\mathcal{P}^{\text{m}}_B)$ and $\beta:=\mathbb{E}[W]$. For $x_1,\dots,x_k\in B$, set $$p(x_1,\dots,x_k):=\mathbb{E}\left[f\left(\overset{k}{\underset{i=1}{\bigcup}}\{(x_i,\tau_i)\},\mathcal{P}^{\text{m}}_B\right)\right],$$
where the $\tau_i$ are independent random elements of $\mathbb{M}$ with common distribution $\mathbf{m}$. 

Suppose that for almost every $\mathbf{x}=(x_1,\dots,x_k)\in B^k$ with $p(x_1,\dots,x_k)>0$ we can find coupled random variables $U_\mathbf{x}$ and $V_\mathbf{x}$ such that:
\begin{enumerate}
\item $W\overset{d}{=}U_\mathbf{x}$,
\item $F\left(\mathcal{P}^{\text{m}}_B\cup\overset{k}{\underset{i=1}{\bigcup}}\{(x_i,\tau_i)\}\right)$ conditional on $f\left(\overset{k}{\underset{i=1}{\bigcup}}\{(x_i,\tau_i)\},\mathcal{P}^{\text{m}}_B\right)=1$ has the same distribution as $1+V_\mathbf{x}$,
\item $\mathbb{E}\left[\vert U_\mathbf{x}-V_\mathbf{x}\vert\right]\leq w(\mathbf{x})$, where $w\colon B^k \to [0, \infty)$ is a measurable function.
\end{enumerate}

Let $P(\beta)$ be a mean $\beta$ Poisson random variable. Then

\begin{equation}d_{\text{TV}}\left(W,P(\beta)\right)\leq\frac{\min(1,\beta^{-1})}{k!}\int_{B^k}w(\mathbf{x})p(\mathbf{x})d \mathbf{x},\label{th:Pdtv}\end{equation}
where $d_{TV}$ denotes the total variation distance between two discrete random variables.
\end{theorem}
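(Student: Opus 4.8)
The plan is to prove the bound by the Stein--Chen method for Poisson approximation, feeding the coupling hypotheses (1)--(3) into the Stein estimate through the Mecke (Palm) calculus of the marked Poisson point process $\mathcal{P}^{\text{m}}_B$. Recall the Poisson Stein equation: for a fixed set $A\subseteq\mathbb{N}_0$ let $g_A\colon\mathbb{N}_0\to\mathbb{R}$ be the bounded solution of
$$\beta\, g_A(j+1)-j\, g_A(j)=\mathbf{1}_A(j)-\mathbb{P}\bigl(P(\beta)\in A\bigr),\qquad j\in\mathbb{N}_0.$$
Evaluating at $j=W$ and taking expectations turns the Poisson mass function into the telescoping identity
$$\mathbb{P}(W\in A)-\mathbb{P}\bigl(P(\beta)\in A\bigr)=\mathbb{E}\bigl[\beta\, g_A(W+1)-W\, g_A(W)\bigr],$$
so that, since $d_{\text{TV}}(W,P(\beta))=\sup_{A}|\mathbb{P}(W\in A)-\mathbb{P}(P(\beta)\in A)|$, it suffices to bound the right-hand side uniformly in $A$. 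The classical Stein-factor estimate $\sup_{j}|g_A(j+1)-g_A(j)|\leq\min(1,\beta^{-1})$ is precisely what produces the prefactor in the asserted inequality, and I would simply quote it.

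Next I would rewrite $\mathbb{E}[W\,g_A(W)]$ using the multivariate Mecke equation. Expanding $W=F(\mathcal{P}^{\text{m}}_B)$ as $\frac{1}{k!}$ times the sum of $f(\{y_1,\dots,y_k\},\mathcal{P}^{\text{m}}_B\setminus\{y_1,\dots,y_k\})$ over \emph{ordered} $k$-tuples of distinct points, the Mecke formula for intensity $\operatorname{Leb}\times\mathbf{m}$ converts this into a $k$-fold integral in which $k$ points $(x_i,\tau_i)$ are added to an independent copy of $\mathcal{P}^{\text{m}}_B$; integrating out the i.i.d.\ marks $\tau_i$ exactly reconstitutes $p(\mathbf{x})$. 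The same computation applied to $W$ itself gives $\beta=\frac{1}{k!}\int_{B^k}p(\mathbf{x})\,d\mathbf{x}$, and applied to $W\,g_A(W)$ it yields the weighted form of $\mathbb{E}[W\,g_A(W)]$ in which, on the event $\{f=1\}$, the added $k$-set contributes one unit to the augmented count $F(\mathcal{P}^{\text{m}}_B\cup\bigcup_{i=1}^k\{(x_i,\tau_i)\})$.

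Then I would insert the coupling. Conditioning on $\{f=1\}$ (an event of probability $p(\mathbf{x})$) and applying hypothesis (2) replaces the augmented count in distribution by $1+V_\mathbf{x}$, so $\mathbb{E}[W\,g_A(W)]=\frac{1}{k!}\int_{B^k}p(\mathbf{x})\,\mathbb{E}[g_A(1+V_\mathbf{x})]\,d\mathbf{x}$; hypothesis (1) together with $\beta=\frac{1}{k!}\int p$ gives $\mathbb{E}[\beta\,g_A(W+1)]=\frac{1}{k!}\int_{B^k}p(\mathbf{x})\,\mathbb{E}[g_A(1+U_\mathbf{x})]\,d\mathbf{x}$. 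Subtracting,
$$\mathbb{E}\bigl[\beta\, g_A(W+1)-W\, g_A(W)\bigr]=\frac{1}{k!}\int_{B^k}p(\mathbf{x})\,\mathbb{E}\bigl[g_A(1+U_\mathbf{x})-g_A(1+V_\mathbf{x})\bigr]\,d\mathbf{x},$$
and the Stein factor applied to the integer increments, $\bigl|g_A(1+U_\mathbf{x})-g_A(1+V_\mathbf{x})\bigr|\leq\min(1,\beta^{-1})\,|U_\mathbf{x}-V_\mathbf{x}|$, combined with hypothesis (3) ($\mathbb{E}|U_\mathbf{x}-V_\mathbf{x}|\leq w(\mathbf{x})$), delivers the claimed bound after taking the supremum over $A$.

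I expect the main obstacle to lie in the Mecke step: keeping the passage between unordered $k$-subsets and ordered $k$-tuples straight (the genuine source of the $1/k!$ and of $\beta=\frac{1}{k!}\int p$), verifying the measurability of $f$ and $F$ needed to apply Fubini and the multivariate Mecke identity, and justifying rigorously that conditioning on $\{f=1\}$ followed by the distributional substitution of hypothesis (2) commutes with integration against $d\mathbf{x}$. The Stein-equation solution bounds are standard and need only be cited.
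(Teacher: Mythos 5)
This theorem is not proved in the paper at all: it is imported verbatim from \cite[Theorem 3.1]{Penrose3}, so there is no internal proof to compare against. Judged on its own merits, your outline is a correct reconstruction of the standard argument, and it is essentially the proof given in the cited source: solve the Poisson--Stein equation, reduce $d_{\text{TV}}$ to bounding $\mathbb{E}\left[\beta\,g_A(W+1)-W\,g_A(W)\right]$ uniformly in $A$, convert $\mathbb{E}\left[W\,g_A(W)\right]$ by the multivariate Mecke formula for the marked process into $\frac{1}{k!}\int_{B^k}p(\mathbf{x})\,\mathbb{E}\left[g_A(1+V_\mathbf{x})\right]d\mathbf{x}$ via hypothesis (2), match it with $\beta\,\mathbb{E}\left[g_A(W+1)\right]=\frac{1}{k!}\int_{B^k}p(\mathbf{x})\,\mathbb{E}\left[g_A(1+U_\mathbf{x})\right]d\mathbf{x}$ via hypothesis (1) and $\beta=\frac{1}{k!}\int_{B^k}p(\mathbf{x})\,d\mathbf{x}$, and finish with the Stein factor and hypothesis (3). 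Your identification of the $1/k!$ as the ordered-versus-unordered bookkeeping is exactly right, since $f$ is defined on $k$-subsets and hence symmetric under relabelling of an ordered $k$-tuple. A few points to tighten in a full write-up: the classical Stein-factor bound is $\sup_j\left|g_A(j+1)-g_A(j)\right|\leq\beta^{-1}\left(1-e^{-\beta}\right)$, from which $\min(1,\beta^{-1})$ follows; the inequality $\left|g_A(1+U_\mathbf{x})-g_A(1+V_\mathbf{x})\right|\leq\min(1,\beta^{-1})\left|U_\mathbf{x}-V_\mathbf{x}\right|$ requires the telescoping over integer steps, so it uses that $U_\mathbf{x},V_\mathbf{x}$ are $\mathbb{N}_0$-valued; the statement implicitly needs $\beta<\infty$ (otherwise $P(\beta)$ is undefined), which should be recorded as a standing assumption or noted to make the bound vacuous; and in the Mecke step for the marked process the intensity $\operatorname{Leb}\times\mathbf{m}$ means the marks are integrated against $\mathbf{m}$, which is precisely what makes the inner expectation reconstitute $p(\mathbf{x})$ with i.i.d.\ $\tau_i\sim\mathbf{m}$. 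Finally, your worry about conditioning commuting with integration is easily dispatched: hypothesis (2) is only assumed for a.e.\ $\mathbf{x}$ with $p(\mathbf{x})>0$, and points with $p(\mathbf{x})=0$ contribute nothing to the integral, so Fubini (justified by the assumed measurability of $f$ and nonnegativity of the integrands) suffices.
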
  

\begin{remark}
Actually, \cite[Theorem 3.1]{Penrose3} also provides a bound for the Wasserstein distance between $W$ and $P(\beta)$.
\end{remark}

The quantities involved in Theorem \ref{Th:Penrose18} at first glance might seem quite abstract, but they are in close relation with the formal construction of the soft random geometric graph given in Subsection \ref{Sec:ConsSoftRGG}. For example important quantities in the graph such as the number of vertices with given degree or the number of components of given size can be studied by considering special choices of the function $F$ in \eqref{coupf}. A key observation here is that the number of edges longer than certain values can also be formulated by means of a function of the type \eqref{coupf}, which we discuss below.

\subsection{The sum of exceedances}

Theorem \ref{main} is achieved by proving the convergence of the corresponding distribution functions, which we obtain by using a Poisson approximation. To this end, following a well-known method in extreme value theory, let us introduce the number of exceedances.

Let $(r_n)_{n\in\N}$ be a sequence such that $0<r_n<2n$ for every $n\in \N$. The number (or sum) of exceedances is formally defined as the random variable $W(n,r_n)$, $n\in \N$, given by
$$  W(n,r_n)=\frac12 \sum_{x\in \mathcal{P}_n}  \sum_{y\in \mathcal{P}_n} \mathbf{1}_{\{\vert x-y \vert  \mathbf{1}_{\{\{x,y\} \in E_n\}}>r_n\}} .$$
In words, $W(n,r_n)$ is the number of edges which are longer than $r_n$. Note that the factor $\frac12$ in front of the sum is due to the fact that the number of vertices which are part of an edge of length above $r_n$ counts every exceedance twice. We show that $W(n,r_n)$, $n\in \N$, approximates a Poisson random variable by writing it as a sum of suitably defined indicator functions (without a factor $\frac12$ in front). The following are auxiliary results in proving that $W(n,r_n)$, $n\in \N$, approximates a Poisson random variable. We note that Lemma \ref{a1} and Lemma \ref{pr:continuousCase} hold for arbitrary connection functions not necessarily satisfying \eqref{cf}.
\begin{lemma}\label{a1}
	It holds
	$$ \E\left[W(n,r_n)\right] =\frac12 \int_{B_n} \int_{B_n\setminus B_{r_n}(x)} g(y-x)dydx . $$
\end{lemma}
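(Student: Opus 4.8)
The plan is to compute the expectation directly from the definition of $W(n,r_n)$ by exploiting the Mecke equation for the marked Poisson point process described in Subsection~\ref{Sec:ConsSoftRGG}. Recall that
$$W(n,r_n)=\frac12 \sum_{x\in \mathcal{P}_n}\sum_{y\in \mathcal{P}_n} \mathbf{1}_{\{\vert x-y \vert \mathbf{1}_{\{\{x,y\}\in E_n\}}>r_n\}},$$
which counts ordered pairs of distinct points $x,y$ that are joined by an edge of length exceeding $r_n$, divided by $2$. The first step is to take the expectation and push it inside the double sum. Since the presence of the edge $\{x,y\}$ is, by construction, independent of the locations once $x$ and $y$ are fixed and occurs with probability $g(x-y)$, I expect the summand to contribute $\mathbf{1}_{\{\vert x-y\vert>r_n\}}\,g(x-y)$ after averaging over the edge-randomness (the marks).

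Concretely, I would apply the multivariate Mecke formula to the pair sum over distinct points $x,y\in\mathcal{P}_n$. For a unit-intensity Poisson process on $B_n$, this converts
$$\E\Bigl[\sum_{x\neq y\in\mathcal{P}_n} h(x,y)\Bigr]=\int_{B_n}\int_{B_n} \E\bigl[h(x,y)\bigr]\,dy\,dx,$$
where here $h(x,y)=\mathbf{1}_{\{\vert x-y\vert>r_n\}}\mathbf{1}_{\{\{x,y\}\in E_n\}}$ and the inner expectation over the edge-marks yields $\mathbf{1}_{\{\vert x-y\vert>r_n\}}\,g(x-y)$. The factor $\frac12$ in front then exactly cancels the double-counting of the unordered pair, giving
$$\E[W(n,r_n)]=\frac12\int_{B_n}\int_{B_n}\mathbf{1}_{\{\vert x-y\vert>r_n\}}\,g(x-y)\,dy\,dx.$$
The final step is purely notational: for fixed $x\in B_n$, the condition $\vert x-y\vert>r_n$ describes exactly the set $B_n\setminus B_{r_n}(x)$, so rewriting the indicator as a restriction of the domain of integration produces the claimed formula. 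A change of variables or the symmetry $g(y-x)=g(x-y)$ matches the integrand to the stated $g(y-x)$.

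The only genuine subtlety, rather than an obstacle, is justifying that averaging over the independent edge-marks produces the factor $g(x-y)$ cleanly inside the Mecke integral. This is where the formal construction from Subsection~\ref{Sec:ConsSoftRGG} is essential: the edge $\{x,y\}$ is present precisely when an auxiliary uniform mark falls below $g(x-y)$, independently of the point configuration, so conditioning on the two inserted points and integrating the mark gives $g(x-y)$ with no residual dependence on $\mathcal{P}_n$. Everything else is routine bookkeeping with the factor $\frac12$ and the indicator rewriting, so I anticipate no technical difficulty beyond correctly invoking the Mecke equation for a marked process.
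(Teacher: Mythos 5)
Your proposal is correct and follows essentially the same route as the paper: both apply the (bivariate) Mecke formula to reduce the expectation to a double integral over $B_n\times B_n$, then use the fact that the two inserted points are joined with probability $g(x-y)$ independently of the configuration, with the factor $\frac12$ and the indicator $\mathbf{1}_{\{|x-y|>r_n\}}$ handled exactly as you describe. Your added remark about averaging the uniform edge-mark is just a more explicit version of the paper's one-line justification, so there is nothing further to reconcile.
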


\begin{proof}
	From the Mecke formula we recall that
	\begin{equation} \label{lp1}
		\E\left[W(n,r_n)\right] =\frac12 \int_{B_n} \int_{B_n}   \P_{x,y} \left( \{x,y\}\in E^{x,y}_n, |x-y| >r_n \right) dx.
	\end{equation}
	Here $\P_{x,y}$ denotes the distribution of the Poisson point process with two points added at $x,y$ and $E^{x,y}_n$ the edge-set of the soft random geometric graph constructed with two points added at $x,y$. Since $x$ and $y$ are connected with probability $g(x-y)$, the result follows.
\end{proof}

\begin{lemma}\label{pr:continuousCase} Let $P(n)$ be a Poisson random variable with mean $\E\left[W(n,r_n)\right].$ It holds that
	\begin{equation*}d_{\text{TV}}\left(W(n,r_n),P(n)\right)\leq \min(1,\E\left[W(n,r_n)\right]^{-1})\mathfrak{I}(n),\end{equation*}
	where
	\begin{align*}
	     \mathfrak{I}(n)&=\int_{B_n}\int_{B_n\setminus B_{r_n}(x_1)}\Big[ \int_{B_n\setminus B_{r_n}(x_1)} g(x_1-z) dz + \int_{B_n\setminus B_{r_n}(x_2)} g(x_2-z) dz\Big]  \\
      & \quad \times g(x_1 -x_2) dx_1 dx_2.
	\end{align*}

\end{lemma}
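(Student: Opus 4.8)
The plan is to apply the Poisson approximation theorem (Theorem \ref{Th:Penrose18}) with $k=2$ to the sum of exceedances $W(n,r_n)$, identifying the bounding integrand $w(\mathbf{x})p(\mathbf{x})$ explicitly and showing it reduces to the stated quantity $\mathfrak{I}(n)$. The natural choice is to let $f(\psi,\xi)$ detect whether a pair $\psi=\{(x_1,\tau_1),(x_2,\tau_2)\}$ forms an edge of length exceeding $r_n$, so that $F$ as in \eqref{coupf} coincides with $W(n,r_n)$ (the division by $k!=2$ in the theorem's bound absorbs the factor $\tfrac12$, and the symmetry of counting ordered pairs matches the definition of $W$). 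With this choice, $p(x_1,x_2)$ is precisely the probability that two added points at $x_1,x_2$ are joined by an edge longer than $r_n$, which equals $g(x_1-x_2)\mathbf{1}_{\{|x_1-x_2|>r_n\}}$.

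The crux is to construct the coupling $(U_\mathbf{x},V_\mathbf{x})$ and identify the function $w(\mathbf{x})$. First I would use the formal construction from Subsection \ref{Sec:ConsSoftRGG}: adding two marked points at $x_1,x_2$ to $\mathcal{P}^{\mathrm{m}}_{B_n}$ and conditioning on the event that they form a long edge. The difference between $F(\mathcal{P}^{\mathrm{m}}_{B_n}\cup\{(x_1,\tau_1),(x_2,\tau_2)\})$ and $W(n,r_n)$ counts exactly the \emph{additional} long edges created by inserting the two new points, namely the edges from $x_1$ or $x_2$ to the points of the original process. Since the insertion only adds edges (never removes them) and the edge $\{x_1,x_2\}$ itself has already been conditioned out by subtracting $1$, I would take $U_\mathbf{x}=W(n,r_n)$ and $V_\mathbf{x}$ to be $W(n,r_n)$ plus these newly created exceedances, so that $|U_\mathbf{x}-V_\mathbf{x}|$ equals the number of long edges emanating from $x_1$ or $x_2$ to the rest of $\mathcal{P}_n$. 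Taking expectations and applying the Mecke formula once more yields
$$
\E[\,|U_\mathbf{x}-V_\mathbf{x}|\,]=\int_{B_n\setminus B_{r_n}(x_1)} g(x_1-z)\,dz+\int_{B_n\setminus B_{r_n}(x_2)} g(x_2-z)\,dz=:w(\mathbf{x}),
$$
which is exactly the bracketed term in $\mathfrak{I}(n)$.

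With these identifications, the conclusion follows by substituting into the bound \eqref{th:Pdtv}. The integration domain $B_n^2$ restricts, through the factor $p(\mathbf{x})=g(x_1-x_2)\mathbf{1}_{\{|x_1-x_2|>r_n\}}$, to pairs with $x_2\in B_n\setminus B_{r_n}(x_1)$, and the prefactor $\min(1,\beta^{-1})/k!$ becomes $\tfrac12\min(1,\E[W(n,r_n)]^{-1})$. A factor $2$ is recovered because the symmetric roles of $x_1,x_2$ in $w(\mathbf{x})$ and in $p(\mathbf{x})$ make the integrand invariant under swapping the two variables, so integrating over ordered pairs yields twice the integral over the reduced region, cancelling the $\tfrac12$; this produces precisely $\min(1,\E[W(n,r_n)]^{-1})\,\mathfrak{I}(n)$.

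The main obstacle I anticipate is verifying conditions (1) and (2) of Theorem \ref{Th:Penrose18} rigorously, in particular checking that the coupling respects the conditioning in (2) and that the inserted edge $\{x_1,x_2\}$ is correctly accounted for by the ``$1+V_\mathbf{x}$'' bookkeeping. One must be careful that conditioning on $f=1$ (i.e.\ on $x_1,x_2$ being joined by a long edge) does not alter the law of the remaining edges, which holds because edge-indicators attach to \emph{distinct} coordinate marks $\tau_{i,j}$ and are therefore mutually independent in the construction; this independence is what guarantees $W\overset{d}{=}U_\mathbf{x}$ and allows $V_\mathbf{x}$ to be defined on the same probability space with the desired marginal. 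Once this independence is exploited, the remaining steps are routine applications of the Mecke formula and bookkeeping of the combinatorial constants.
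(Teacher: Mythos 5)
Your high-level strategy (Theorem \ref{Th:Penrose18} with $k=2$, an indicator $f$ detecting a long edge, and the Mecke formula to bound $\E\left[\vert U_\mathbf{x}-V_\mathbf{x}\vert\right]$) is the paper's strategy, and your identification of $p(x_1,x_2)=g(x_1-x_2)\1_{\{|x_1-x_2|>r_n\}}$ and of the final integral is essentially correct. But there is a genuine gap in the coupling, and it sits exactly at the subtlety the paper is careful about. Under the construction of Subsection \ref{Sec:ConsSoftRGG}, which you invoke, your claim that ``the insertion only adds edges (never removes them)'' is \emph{false}: whether two points are joined is read off the mark coordinate $t_{i,j}$ indexed by the \emph{rank-order enumeration} of the points (ordered by their $0$-th mark coordinates), and inserting $x_1,x_2$ shifts the ranks of all points coming after them. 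For example, if the original enumeration is $y_1,y_2,y_3$ and $x_1$ is inserted between $y_2$ and $y_3$, the original edge $\{y_1,y_3\}$ was decided by $t_{1,3}\le g(y_1-y_3)$, while in the enlarged configuration that same coordinate $t_{1,3}$ now decides the pair $\{y_1,x_1\}$, and $\{y_1,y_3\}$ is re-randomized via $t_{1,4}$. Hence the graph induced on $\mathcal{P}_n$ by the enlarged configuration (the paper's $H_n$) does \emph{not} coincide almost surely with $G(\mathcal{P}_n^{\text{m}})$; it only has the same distribution.

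This breaks your choice $U_\mathbf{x}=W(n,r_n)$, whichever way one reads your $V_\mathbf{x}$. If $V_\mathbf{x}$ is the exceedance count of the enlarged graph minus the indicator of $\{x_1,x_2\}$, then Assumption (2) holds, but $\vert U_\mathbf{x}-V_\mathbf{x}\vert$ is no longer the number of long edges at $x_1,x_2$: it also contains the symmetric-difference discrepancy between the edge sets of $G(\mathcal{P}_n^{\text{m}})$ and of $H_n$, which is not bounded by $w(\mathbf{x})$, so Assumption (3) fails. If instead, as your text literally says, $V_\mathbf{x}:=W(n,r_n)+D$ with $D$ the number of long edges from $x_1$ or $x_2$ in the enlarged graph, then (3) holds by Mecke, but (2) fails: conditionally on $f=1$ the enlarged count equals $1+F_{H_n}+D$, where $F_{H_n}$ and $D$ use disjoint mark coordinates and are conditionally independent given the positions, whereas your $W$ and $D$ can share mark coordinates (as in the example above) and are positively correlated, so $1+V_\mathbf{x}$ and the conditioned enlarged count have different laws in general. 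The missing idea, which is the crux of the paper's proof, is to take $U_\mathbf{x}$ to be the exceedance count of $H_n$ itself: then $U_\mathbf{x}\le V_\mathbf{x}$ pointwise with $V_\mathbf{x}-U_\mathbf{x}=D$, and Assumption (1) holds only \emph{in distribution}, via the observation that $H_n\overset{d}{=}G(\mathcal{P}_n^{\text{m}})$ because the marks are i.i.d. A smaller remark: your factor-of-two bookkeeping at the end is off --- the inner domain $B_n\setminus B_{r_n}(x_1)$ in $\mathfrak{I}(n)$ already covers both sides of $x_1$, so \eqref{th:Pdtv} in fact yields the stronger bound $\tfrac12\min\left(1,\E\left[W(n,r_n)\right]^{-1}\right)\mathfrak{I}(n)$ --- but that slip is harmless since it errs in the safe direction.
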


\begin{proof}
By using the construction of the soft random geometric graph in terms of a marked Poisson point process described in Subsection \ref{Sec:ConsSoftRGG}, we can apply Theorem \ref{Th:Penrose18} with $k=2$ and $f \colon \mathbf{S}_2\times \mathbf{S}$, where $\mathbf{S}_2$ is the set of all subsets of $ B_n\times \mathbb{M}$ with size 2, defined by
\begin{align*}
    &f\left( \left\{\left( x,(u_i)_{i\geq 0}\right), ( y,(v_i)_{i\geq 0})\right\},\xi \right) =\mathbf{1}_{ \big\{ \{x,y\} \in E(\xi\cup\{(x,(u_i)_{i\geq 0}), ( y,(v_i)_{i\geq 0}) \}), \vert x-y \vert >r_n, y \geq x \big\}} 
\end{align*}
for $\{( x,(u_i)_{i\geq 0}), ( y,(v_i)_{i\geq 0}) \} \in \mathbf{S}_2$, so that $ W=F(\mathcal{P}_n^{\text{m}})=W(n,r_n)$. One can formally check that $f$ is the indicator function of a measurable set, see \cite[Section 3]{rs} for details.

In order to apply Theorem \ref{Th:Penrose18}, we now need to define the coupled random variables $U_{x_1,x_2}$ and $V_{x_1,x_2}$. To this end, let us enlarge $\mathcal{P}_n^{\text{m}}$ by adding two marked points $x_1^{\text{m}}=(x_1,(u_i)_{i\geq 0})$ and $x_2^{\text{m}}=(x_2,(v_i)_{i\geq 0})$. More precisely, we add two points $x_1$ and $x_2$ in the ground process $\mathcal{P}_n$ of $\mathcal{P}_n^{\text{m}}$ and we attach to them random marks $(u_i)_{i\geq 0}$ and $(v_i)_{i\geq 0}$ distributed according to $\mathbf{m}$ and independent of $\mathcal{P}_n^{\text{m}}$. Then  we define $V_{x_1,x_2}$ as the number of exceedances in the enlarged graph $G(\mathcal{P}^{\text{m}}_n \cup \{x_1^{\text{m}}, x_2^{\text{m}}\} )$ other than the one at $x_1$ and $x_2$ (if there is one), more precisely
\begin{align*}
 V_{x_1,x_2}&= \sum_{y,z \in \mathcal{P}_n \cup \{x_1, x_2\}}  
\mathbf{1}_{ \big\{ \{y,z\} \in E(\mathcal{P}^{\text{m}}_n\cup\{(x_1,(u_i)_{i\geq 0}), ( x_2,(v_i)_{i\geq 0}) \}), \vert z-y \vert >r_n, z \geq y \big\}} \\
& \quad - \mathbf{1}_{ \big\{ \{x_1,x_2\} \in E(\mathcal{P}^{\text{m}}_n\cup\{(x_1,(u_i)_{i\geq 0}), ( x_2,(v_i)_{i\geq 0}) \}), \vert x_1-x_2 \vert >r_n, x_2 \geq x_1 \big\}}
\end{align*}
 .

Since the point process is simple, it follows directly from this definition and the ones of $f$ and $F$ that almost surely
\[F\left(\mathcal{P}^{\text{m}}_n \cup \{x_1^{\text{m}}, x_2^{\text{m}}\}\right)= V_{x_1,x_2}+f\left(\left\{\left(x_1,(u_i)_{i\geq 0}\right), (x_2,(v_i)_{i\geq 0}\right)\right\}, \mathcal{P}_n^{\text{m}}).\]
In particular, $V_{x_1,x_2}$ satisfies Assumption {\it (2)} of Theorem \ref{Th:Penrose18}.

Now  let $H_n=(\mathcal{P}_n,E_{H_n})$ be the subgraph of $G(\mathcal{P}^{\text{m}}_n \cup \{x_1^{\text{m}},x_2^{\text{m}}\})$ obtained from it by deleting the vertices $x_1$, $x_2$ and all the edges having $x_1$ or $x_2$ as an endpoint. The random variable $U_{x_1,x_2}$ is then defined as the number of exceedances in $H_n$, namely:
$$ U_{x_1,x_2}= \sum_{y,z \in \mathcal{P}_n} \mathbf{1}_{ \big\{ \{y,z\} \in E_{H_n}, \vert z-y \vert >r_n, z \geq y \big\}}.$$ 

Note that $H_n$ does not coincide a.s.\,with $G(\mathcal{P}^{\text{m}}_n)$. Indeed, an edge between two points $x$ and $y$ in $\mathcal{P}_n$ does not only depend on the marks at $x$ and $y$ but also on all the other marks, since they are used to determine the order in which the points are enumerated (see Subsection \ref{Sec:ConsSoftRGG}). Nevertheless, one can see that $H_n$ has the same distribution as $G(\mathcal{P}^{\text{m}}_n)$, since the marks are independent sequences of independent uniform random variables on $[0,1)$. In particular, $U_{x_1,x_2}$ has the same distribution as $W$; in other words, Assumption {\it (1)} of Theorem \ref{Th:Penrose18} is satisfied.

Let us now check Assumption {\it (3)} of Theorem \ref{Th:Penrose18}. In the following we will write $x \leftrightarrow y$ if two vertices are connected by an edge. Since $V_{x_1,x_2}\geq U_{x_1,x_2}$ by construction, it holds that
\begin{align*}
& \mathbb{E}\left[\vert U_{x_1,x_2}-V_{x_1,x_2}\vert\right]=\mathbb{E}\left[ V_{x_1,x_2}-U_{x_1,x_2}\right]\\
& \leq \mathbb{E}\left[\sum_{y \in \{x_1,x_2\} } \sum_{z \in \mathcal{P}_n} \mathbf{1}_{ \big\{ \{y,z\} \in E(\mathcal{P}^{\text{m}}_n \cup \{x_1^{\text{m}}, x_2^{\text{m}}\}), \vert z-y \vert >r_n, z \geq y \big\}} \right]\\
&\leq  \mathbb{E}\left[ \sum_{z \in \mathcal{P}_n} \mathbf{1}_{ \big\{ \{x_1,z\} \in E(\mathcal{P}^{\text{m}}_n \cup \{x_1^{\text{m}}, x_2^{\text{m}}\}), \vert z-x_1 \vert >r_n \big\}} \right] + \mathbb{E}\left[ \sum_{z \in \mathcal{P}_n} \mathbf{1}_{ \big\{ \{x_2,z\} \in E(\mathcal{P}^{\text{m}}_n \cup \{x_1^{\text{m}}, x_2^{\text{m}}\}), \vert z-x_2 \vert >r_n \big\}} \right] \\
&=\int_{B_n} \mathbb{P}_z\left(|x_1-z| \mathbf{1}_{\{z\leftrightarrow x_1\}} > r_n\right)dz + \int_{B_n} \mathbb{P}_z\left(|x_2-z| \mathbf{1}_{\{z\leftrightarrow x_2\}} > r_n\right)dz\nonumber\\
&=\int_{B_n\setminus B_{r_n}(x_1)} g(x_1-z) dz + \int_{B_n\setminus B_{r_n}(x_2)} g(x_2-z) dz=:w(x_1, x_2) ,
\end{align*}
where we used the Mecke formula. So Assumption {\it (3)} of Theorem \ref{Th:Penrose18} is also satisfied. 

Finally, we rewrite the integral in the r.h.s. of \eqref{th:Pdtv} as follows:
\begin{align*}
&\int_{B_n} \int_{B_n}w(x_1,x_2)p(x_1,x_2)dx_1dx_2
\\&=\int_{B_n}\int_{B_n}\Big[ \int_{B_n\setminus B_{r_n}(x_1)} g(x_1-z) dz + \int_{B_n\setminus B_{r_n}(x_2)} g(x_2-z) dz\Big] \\
& \quad \times \mathbb{E}\left[f\left(\{x_1^{\text{m}}, x_2^{\text{m}}\},\mathcal{P}^{\text{m}}_B\right)\right]dx_1 dx_2\\
&=\int_{B_n}\int_{B_n}\Big[ \int_{B_n\setminus B_{r_n}(x_1)} g(x_1-z) dz + \int_{B_n\setminus B_{r_n}(x_2)} g(x_2-z) dz\Big] \\
& \quad \times \mathbb{P}_{x_1,x_2}\left( \max_{y\in\mathcal{P}_n} \vert x_1-x_2\vert\mathbf{1}_{\{x_1\leftrightarrow x_2\}} >r_n\right) dx_1 dx_2\\
&=\int_{B_n}\int_{B_n\setminus B_{r_n}(x_1)}\Big[ \int_{B_n\setminus B_{r_n}(x_1)} g(x_1-z) dz + \int_{B_n\setminus B_{r_n}(x_2)} g(x_2-z) dz\Big]  g(x_1 -x_2) dx_1 dx_2\\
&=\mathfrak{I}(n).
\end{align*}
This concludes the proof.
\end{proof}

We now state a result derived from Lemma \ref{pr:continuousCase}. Since it follows from Lemma~\ref{pr:continuousCase} and rather standard computations, we postpone the details of its proof to the Appendix. On the one hand this result yields the Poisson approximations needed to complete the proof of Theorem \ref{main}. On the other hand it additionally illustrates some subtle differences regarding the upper bounds one obtains for the convergence speed. 


\begin{prop}\label{prop:CVP}

	Let $r\in (0,1)$. Define $g_r = \exp \left( - \sqrt{-8 \ln r} \right)$, $c_r=h^{-1}\left(r^{-\frac{1}{2}}\right)$, where $h$ is as in Theorem \ref{main} {\it (II)}, and for $n\in\mathbb{N}$ 
 \begin{equation}\label{eq:defrn}
r_n:=\left\{\begin{array}{ll}
(2n)^{\frac{1}{\alpha-1}} \left( \frac{1-\alpha}{2} \ln r \right)^{\frac{1}{1-\alpha}},&\mbox{if }\alpha \in (2,\infty),\\
c_rn,&\mbox{if }\alpha =2,\\
\left( (2n)^{2-\alpha} - (2-\alpha) 2^{1 - \frac{\alpha}{2}} \sqrt{-\ln r} n^{1-\frac{\alpha}{2}} \right)^{\frac{1}{2-\alpha}},&\mbox{if } \alpha \in (1,2),\\
2n g_r^{\frac{1}{4\sqrt{n}}},&\mbox{if }\alpha =1,\\
\left( (2n)^{1-\alpha} - (1-\alpha) 2^{ - \frac{\alpha}{2}} \sqrt{-\ln r} n^{-\frac{\alpha}{2}} \right)^{\frac{1}{1-\alpha}},&\mbox{if } \alpha \in (0,1).
\end{array}\right.
\end{equation}
   
   Then, for each $\alpha \in (0, \infty)$, there exists a constant $C \in (0,\infty)$ independent of $n$ such that
	$$ d_{\text{TV}}\left(W(n,r_n),P(n)\right)\leq \left\{ \begin{array}{ll}
	    C n^{1-\alpha}, &\mbox{if }\alpha\in (1,\infty),  \\
	     C n^{-\frac{\alpha}{2}},& \mbox{if }\alpha\in (0,1],
	\end{array}\right.$$
 where $P(n)$ is a Poisson random variable with mean $\mathbb{E}[W(n,r_n)]$. Moreover, in all the cases, it holds that 
	$$ \lim_{n \to \infty} \E\left[W(n,r_n)\right] = -\frac12 \ln r.$$
\end{prop}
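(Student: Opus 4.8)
The plan is to prove Proposition~\ref{prop:CVP} in two parts: first establishing the convergence of the mean $\E[W(n,r_n)] \to -\tfrac12 \ln r$, and then bounding the total variation distance via Lemma~\ref{pr:continuousCase}. I would begin with the mean, since the total variation bound from Lemma~\ref{pr:continuousCase} requires the factor $\min(1,\E[W(n,r_n)]^{-1})$ and an understanding of the integrand $\mathfrak{I}(n)$, both of which are governed by the same double integrals that compute the mean. By Lemma~\ref{a1}, writing $g(x)\sim |x|^{-\alpha}$, we have
$$\E[W(n,r_n)] = \frac12 \int_{B_n}\int_{B_n \setminus B_{r_n}(x)} g(y-x)\,dy\,dx,$$
and my first step would be to exploit the symmetry in $x$ and reduce the inner integral to an explicit function of $x$ and $r_n$. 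The boundary effects matter here: for $x$ near the center, both tails $[x+r_n, n]$ and $[-n, x-r_n]$ contribute, while for $x$ near the edges only one tail is present. I would parametrize by the position of $x$ and integrate the polynomial tail $\int_{x+r_n}^{n} (y-x)^{-\alpha}\,dy$ explicitly, which produces different leading-order behavior depending on whether $\alpha>1$, $\alpha=1$, or $\alpha<1$ (convergent versus divergent tail integrals). This is precisely where the five-case structure of $r_n$ in \eqref{eq:defrn} is engineered: each choice is calibrated so that the resulting asymptotic expansion of the mean collapses to the common limit $-\tfrac12\ln r$.

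The main work is to verify, case by case, that substituting the prescribed $r_n$ into the explicit double integral yields $\E[W(n,r_n)] \to -\tfrac12\ln r$. In the regime $\alpha>2$, the tail integral is dominated by the $r_n^{1-\alpha}$ term and the $x$-integration over $B_n$ contributes a factor of order $n$, so the product $n\,r_n^{1-\alpha}$ must be tuned; the form $r_n = (2n)^{1/(\alpha-1)}(\tfrac{1-\alpha}{2}\ln r)^{1/(1-\alpha)}$ is exactly what makes $\tfrac{2n}{1-\alpha} r_n^{1-\alpha} = \ln r$, matching phase~(I) of Theorem~\ref{main}. In the subcritical regimes $\alpha<2$, the dominant contribution comes from $x$ near the boundary and the edge of the window, where $r_n$ is close to $2n$; here I expect a Laplace-type or Taylor expansion around $r_n \approx (2n)^{\cdot}$ to be needed, with the Gaussian-looking quadratic exponent in phases~(III),(IV),(V) arising from a second-order expansion of $(e_n^*)^{2-\alpha}-(2n)^{2-\alpha}$ (respectively the $1-\alpha$ version). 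The borderline cases $\alpha=2$ and $\alpha=1$ require separate handling because the tail integral produces logarithms; for $\alpha=2$ the definition $r_n = c_r n$ with $c_r = h^{-1}(r^{-1/2})$ is reverse-engineered from the transcendental function $h$ appearing in phase~(II), and verifying the limit amounts to checking that $h(c_r) = r^{-1/2}$ feeds correctly into the mean computation.

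For the total variation bound, I would insert the same asymptotics into $\mathfrak{I}(n)$ from Lemma~\ref{pr:continuousCase}. The integrand of $\mathfrak{I}(n)$ contains the single-vertex exceedance weight $\int_{B_n\setminus B_{r_n}(x_i)} g(x_i - z)\,dz$ multiplied against $g(x_1 - x_2)$ and integrated over the exceedance region $\{|x_1 - x_2| > r_n\}$. The inner weight is exactly the quantity already estimated for the mean, so the bound reduces to estimating $\int_{B_n}\int_{B_n\setminus B_{r_n}(x_1)} (\text{weight})\, g(x_1-x_2)\,dx_1\,dx_2$; since $\E[W(n,r_n)]$ stays bounded, the prefactor $\min(1,\E[W]^{-1})$ is bounded below and the whole bound scales like the weight times $\E[W(n,r_n)]$, giving the stated rates $n^{1-\alpha}$ for $\alpha>1$ and $n^{-\alpha/2}$ for $\alpha \le 1$. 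I expect the hardest technical obstacle to be the uniform control of the boundary contributions in the subcritical cases $\alpha<2$: because the relevant mass concentrates near $x_i \approx \pm n$ where $r_n$ is comparable to the window size $2n$, the error terms from the asymptotic $g(x)\sim|x|^{-\alpha}$ must be shown not to accumulate, and the second-order Taylor remainder in the quadratic exponent must be controlled uniformly in $x$. These are the routine but delicate computations the authors defer to the Appendix, and I would organize them by first isolating the leading term in each phase, then bounding the remainder by a quantity that vanishes at the claimed rate.
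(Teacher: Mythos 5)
Your proposal follows essentially the same route as the paper's appendix: replace $g$ by its tail asymptotics $|x|^{-\alpha}$, compute the exceedance mean $\tfrac12 I(n)$ explicitly case by case (splitting according to whether one or both tails of $B_n\setminus B_{r_n}(x)$ are present, with Taylor/l'H\^opital expansions in the regimes $\alpha<2$ where $r_n\sim 2n$ and the mass sits near the endpoints), and then bound the total variation distance by
$\min\left(1,\E[W(n,r_n)]^{-1}\right)\mathfrak{I}(n)\le C\,\E[W(n,r_n)]\max_{x\in B_n}\int_{B_n\setminus B_{r_n}(x)}g(y-x)\,dy$,
exactly as the paper does. One arithmetic slip worth fixing when you carry out the computation: for $\alpha>2$ the calibration is $\tfrac{2n}{1-\alpha}r_n^{1-\alpha}=\tfrac12\ln r$, not $\ln r$ (otherwise the mean would converge to $-\ln r$ instead of the claimed $-\tfrac12\ln r$); with that correction your plan coincides with the paper's proof.
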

\begin{remark}
The following is a purely mathematical observation, which seems interesting for us. For fixed $r$ and $n$, the thresholds $r_n$ in \eqref{eq:defrn}, seen as a function of {\color{black}$\alpha\in (0,\infty)$, are continuous except at 1 and 2, precisely where the phase transitions occur}. Though the random graphs are more irregular for $\alpha \in (0,1]$, as seen in our results and as they approximate locally infinite graphs, it appears that they exhibit mathematically interesting structures. 
\end{remark}
{\color{black}\subsection{Conclusion} 
For $r\in (0,1)$ and $\alpha \in (0, \infty)$ fixed, it follows from Proposition~\ref{prop:CVP} that $W(n,r_n)$ converges in distribution to a Poisson random variable with mean $-\ln r$, as $n\rightarrow \infty$. In particular, since $\{e^*_n\leq r_n\}=\{W(n)=0\}$, we obtain the following corollary. 

\begin{cor}
Let $\alpha \in (0, \infty)$ and $r\in (0,1)$. It holds that
\[\lim_{n\to \infty}\P \left(e^*_n\leq r_n\right)=r^{\frac12}.\]
\end{cor}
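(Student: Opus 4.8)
The plan is to deduce the corollary directly from Proposition~\ref{prop:CVP} by reading off the probability of the event $\{W(n,r_n)=0\}$. The starting point is the elementary observation, already recorded in the text, that the longest edge fails to exceed the threshold $r_n$ precisely when there are no edges longer than $r_n$; that is,
\[
\{e_n^*\leq r_n\}=\{W(n,r_n)=0\}.
\]
(On the null event that $E_n=\emptyset$ both sides hold, so this identity is valid up to the usual convention for the maximum over an empty set; in any case $\P(E_n=\emptyset)\to 0$.) Consequently it suffices to analyse $\P(W(n,r_n)=0)$.

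First I would compare $W(n,r_n)$ with the Poisson random variable $P(n)$ of mean $\beta_n:=\E[W(n,r_n)]$ supplied by Proposition~\ref{prop:CVP}. Since both are $\Z$-valued, the definition of total variation distance gives, with $A=\{0\}$,
\[
\bigl|\P(W(n,r_n)=0)-\P(P(n)=0)\bigr|\leq d_{\text{TV}}\bigl(W(n,r_n),P(n)\bigr).
\]
Proposition~\ref{prop:CVP} bounds the right-hand side by $Cn^{1-\alpha}$ for $\alpha>1$ and by $Cn^{-\alpha/2}$ for $\alpha\leq 1$, each of which tends to $0$ as $n\to\infty$ for every fixed $\alpha\in(0,\infty)$. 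Hence $\P(W(n,r_n)=0)$ and $\P(P(n)=0)$ have the same limiting behaviour.

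Next I would evaluate the Poisson null probability explicitly as $\P(P(n)=0)=e^{-\beta_n}$. The second assertion of Proposition~\ref{prop:CVP} gives $\beta_n\to -\tfrac12\ln r$, so by continuity of the exponential,
\[
\P(P(n)=0)=e^{-\beta_n}\xrightarrow[n\to\infty]{}e^{\frac12\ln r}=r^{1/2}.
\]
Combining the three displays yields $\P(e_n^*\leq r_n)=\P(W(n,r_n)=0)\to r^{1/2}$, as claimed.

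There is no serious obstacle here: the entire content is delivered by Proposition~\ref{prop:CVP}, and the argument is the standard Poisson-approximation route to an extreme-value limit. The only points requiring a moment's care are the event identity (in particular the harmless empty-edge-set case) and the observation that convergence in total variation controls the difference of the individual point masses at $0$; both are immediate.
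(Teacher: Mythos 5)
Your proof is correct and follows essentially the same route as the paper: Proposition~\ref{prop:CVP} supplies both the vanishing total-variation bound and the convergence $\E[W(n,r_n)]\to-\tfrac12\ln r$, which combined with the event identity $\{e_n^*\leq r_n\}=\{W(n,r_n)=0\}$ gives the limit $r^{1/2}$. Your spelled-out version is in fact slightly more careful than the paper's one-line deduction, which misstates the limiting Poisson mean as $-\ln r$ rather than $-\tfrac12\ln r$, the value actually needed for the stated limit $r^{1/2}$.
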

Since the event $\{e^*_n\leq r_n \}$ can be rewritten as the event $\{f_n(e^*_n)\leq r^{\frac12} \}$, where
\[ f_n(e^*_n) = \left \{\begin{array}{ll}
      \exp \left( \frac{2n}{1-\alpha}    (e_n^*)^{1-\alpha} \right), &\mbox{if }\alpha \in (2,\infty),\\
      h \left( \frac{e_n^*}{n}\right)^{-1} , &\mbox{if }\alpha=2,\\
      \exp \left( -(2n)^{\alpha-2} 2^{-1}(2-\alpha)^{-2}  \left( (e_n^*)^{2-\alpha} - (2n)^{2-\alpha} \right)^2 \right) , &\mbox{if }\alpha\in (1,2) ,\\
      \exp\left( -n \left( \ln\frac{e_n^*}{2n} \right) ^{2} \right), &\mbox{if }\alpha=1,\\
      \exp \left( -(2n)^\alpha  2^{-1} (1-\alpha)^{-2}  \left( (e_n^*)^{1-\alpha} - (2n)^{1-\alpha} \right)^2 \right) , &\mbox{if }\alpha\in (0,1),
\end{array}\right.\]
Theorem \ref{main} readily follows.
}



\appendix

\section{Proof of Proposition \ref{prop:CVP}}

The following fact will be useful in applying Lemma \ref{pr:continuousCase}. Since $g(x){\sim}x^{-\alpha}$, $x\to \infty$, it is a consequence of Lemma \ref{a1}. If $r_n$ is so that $r_n\to \infty$, as $n \to \infty$, and
\begin{equation}\label{eq:NC3}
\max_{x\in B_n}\int_{B_n\setminus B_{r_n}(x)} g(y-x) dy \underset{n\to \infty}{\longrightarrow} 0
\end{equation} 
then 
$$\E[ W(n,r_n) ]\underset{n\to \infty}{\sim} \frac12 I(n),$$
with
$$I(n):=\int_{B_n}\int_{B_n\setminus B_{r_n}(x)} |y-x|^{-\alpha}dy dx$$
and 
\begin{align*}
\mathfrak{I}(n)& \leq 2  \left( \max_{x\in B_n}\int_{B_n\setminus B_{r_n}(x)} g(y-x) dy \right) \int_{B_n} \int_{B_n\setminus B_{r_n}(x_1)} g(x_1-x_2) dx_2 dx_1\\
&\leq C \E\left[W(n,r_n)\right]\max_{x\in B_n}\int_{B_n\setminus B_{r_n}(x)} g(y-x) dy.
\end{align*}
\subsection{Case $\alpha>2$}
	We first check that
	$$ \lim_{n\to\infty} I(n) =  -\ln r.$$
	We write
	\begin{align*}
	I(n)=2\int_{0}^n \int_{B_n \setminus B_{r_n}(x)} |y-x|^{-\alpha} dy dx = 2I_1 (n) + 2I_2 (n)
	\end{align*}
	with
	\begin{align*}
	I_1(n) & = \int_{n-r_n}^n \int^{x-r_n}_{-n} |y-x|^{-\alpha} dy dx=\int_{n-r_n}^n \int^{n+x}_{r_n} s^{-\alpha} ds dx\\
	&=\frac{1}{1-\alpha}\int_{n-r_n}^n (n+x)^{1-\alpha}-{r_n}^{1-\alpha} dx\\
	& = \frac{1}{1-\alpha}  \frac{1}{2-\alpha} \left( (2n)^{2-\alpha} - \left(2n-r_n\right)^{2-\alpha} \right) - \frac{1}{1-\alpha} r_n^{2-\alpha}
	\end{align*}
	and
	\begin{align*}
	I_2(n) & = \int_{0}^{n-r_n} \left(\int^{x-r_n}_{-n} |y-x|^{-\alpha} dy  +  \int_{x+r_n}^{n} |y-x|^{-\alpha} dy\right) dx\\
	& =\int_{0}^{n-r_n} \left(\int^{n+x}_{r_n} s^{-\alpha} ds  +  \int_{r_n}^{n-x} s^{-\alpha} ds\right) dx\\
	& =\frac{1}{1-\alpha}\int_{0}^{n-r_n} \left(\left(n+x\right)^{1-\alpha}+\left(n-x\right)^{1-\alpha}-2r_n^{1-\alpha}\right) dx\\
	& =\frac{1}{1-\alpha}\frac{1}{2-\alpha}\left(\left(2n-r_n\right)^{2-\alpha}-r_n^{2-\alpha}\right)-\frac{2}{1-\alpha}(n-r_n)r_n^{1-\alpha}.
	\end{align*}
	It follows that
	\begin{align*}
	I(n)&=\frac{2}{1-\alpha}\left(\frac{1}{2-\alpha}\left(\left(2n\right)^{2-\alpha}-r_n^{2-\alpha}\right)-(2n-r_n)r_n^{1-\alpha}\right)\\
	&=\frac{2}{1-\alpha}\Bigg[\frac{1}{2-\alpha}\left(\left(2n\right)^{2-\alpha}-(2n)^{\frac{2-\alpha}{\alpha-1}} \left( \frac{1-\alpha}{2} \ln r \right)^{\frac{2-\alpha}{1-\alpha}}\right)\\
	&\qquad\qquad\qquad -\left(1-(2n)^{-\frac{\alpha}{\alpha-1}} \left( \frac{1-\alpha}{2} \ln r \right)^{\frac{1}{1-\alpha}}\right)\left( \frac{1-\alpha}{2} \ln r \right)\Bigg]\\
	&\underset{n\to \infty}{\longrightarrow} -\ln r,
	\end{align*}
	since $\alpha>2$. Thus, the claim follows from Lemma \ref{a1} and \eqref{eq:NC3}. The assertion \eqref{eq:NC3} is easily verified. Then from Lemma \ref{pr:continuousCase} and straightforward calculations we get
	\begin{align*}
		& d_{\text{TV}}\left(W(n,r_n),P(n)\right) \\
		& \leq 2\left( \max_{x\in B_n} \int_{B_n\setminus B_{r_n}(x)}g(y-x)d y\right)  \min(1,\E\left[P(n)\right]^{-1}) \int_{B_n} \int_{B_n\setminus B_{r_n}(x)}g(y-x)d yd x \\
		& \leq Cn^{1-\alpha}.
	\end{align*}
	
\subsection{Case $\alpha=2$}
	Again \eqref{eq:NC3} is easily verified and the upper bound on the total variation distance follows as in the previous regime, once we check the second assertion of this Theorem. Note that the function
	$h\colon (0,2] \to [1,\infty)$ is bijective. For notational simplicity define $c_r=h^{-1}\left(r^{-\frac{1}{2}}\right)\in (0,2]$. First, assume that $r$ is such that $c_r\in (0,1]$. It follows that
	\begin{align*}
	I(n)&=2\int_{0}^n \int_{B_n \setminus B_{r_n}(x)} |y-x|^{-2} dy dx\\
	& = 2\int_{0}^n \int_{c_rn}^{n+x} s^{-2} ds dx+ 2\int_{0}^{(1-c_r)n} \int_{c_rn}^{n-x} s^{-2} ds dx\\
	& = 2\int_{0}^n ((c_rn)^{-1}-(n+x)^{-1}) dx+ 2\int_{0}^{(1-c_r)n} ((c_rn)^{-1}-(n-x)^{-1}) dx\\
	& = 2 \frac{2-c_r}{c_r}- 2 \int_{0}^n (n+x)^{-1} dx- 2\int_{0}^{(1-c_r)n} (n-x)^{-1} dx\\
	& = 2 \frac{2-c_r}{c_r}- 2 \ln(2n)+2\ln(n)+ 2\ln (c_rn)-2\ln (n)\\	
	&=2\left(\ln\left(\exp\left(\frac{2-c_r}{c_r}\right)\right)- \ln(2n)+\ln (c_rn)\right)\\
	&=2\ln\left(\frac{c_r}{2}\exp\left(\frac{2-c_r}{c_r}\right)\right)=-\ln\left(\left(\frac{c_r}{2}\exp\left(\frac{2-c_r}{c_r}\right)\right)^{-2}\right)\\	
	&=-\ln r
	\end{align*}
	by definition of $c_r$.
	
	Assume now that $r$ is such that $c_r\in (1,2]$. Then it follows that
	\begin{align*}
	I(n)&=\int_{-n}^n \int_{B_n \setminus B_{r_n}(x)} |y-x|^{-2} dy dx = 2\int_{0}^n \int_{B_n \setminus B_{r_n}(x)} |y-x|^{-2} dy dx\\ 
	& = 2\int_{(c_r-1)n}^n \int_{c_rn}^{n+x} s^{-2} ds dx = 2\int_{(c_r-1)n}^n ((c_rn)^{-1}-(n+x)^{-1}) dx\\
	& = 2 \left(\frac{2-c_r}{c_r}-\ln (2n)+\ln(c_rn)\right)=-\ln r
	\end{align*}
	by definition of $c_r$ as before.
	
\subsection{Case $1<\alpha<2$}
	Since $\alpha>1$, \eqref{eq:NC3} easily follows. Note that, for $n$ large, $n<r_n<2n$. We have
	
	\begin{align}
	I(n)&=2\int_{0}^n \int_{B_n \setminus B_{r_n}(x)} |y-x|^{-\alpha} dy dx = 2\int_{-n+r_n}^n \int_{B_n \setminus B_{r_n}(x)} |y-x|^{-\alpha} dy dx\nonumber\\
	&= 2\int_{-n+r_n}^n \int_{r_n}^{n+x}s^{-\alpha} ds dx=\frac{2}{1-\alpha}\int_{-n+r_n}^n \left(n+x\right)^{1-\alpha}-r_n^{1-\alpha}dx\nonumber\\
	&=\frac{2}{1-\alpha}\left(\frac{1}{2-\alpha}\left((2n)^{2-\alpha}-r_n^{2-\alpha}\right)-(2n-r_n)r_n^{1-\alpha}\right)\nonumber\\
	&=\frac{2}{1-\alpha}\left((2n)^{1-\alpha}-r_n^{1-\alpha}\right)2n-\frac{2}{2-\alpha}\left((2n)^{2-\alpha}-r_n^{2-\alpha}\right)\nonumber\\	
	&=:\frac{2}{1-\alpha} \widetilde{I}(n)\label{eq:tile{I}}
	\end{align}
	with 
	\begin{align*} 
	\begin{split}
	\widetilde{I}(n) & = \left( (2n)^{1-\alpha} - \left( (2n)^{2-\alpha} - (2-\alpha) 2^{1 - \frac{\alpha}{2}} \sqrt{-\ln r} n^{1-\frac{\alpha}{2}} \right)^{\frac{1-\alpha}{2-\alpha}} \right) 2n\\&
	\qquad\qquad - (1-\alpha) 2^{1 - \frac{\alpha}{2}} \sqrt{-\ln r} n^{1-\frac{\alpha}{2}}  \\
	& = n^{1-\frac{\alpha}{2}} \Bigg[ -(1-\alpha) 2^{1-\frac{\alpha}{2}} \sqrt{-\ln r} \\
	& \qquad\qquad+ 2^{2-\alpha} n^{1-\frac{\alpha}{2}}\left( 1- \left( 1 - (2-\alpha) 2^{ 1- \frac{\alpha}{2}} \sqrt{-\ln r} 2^{-2+\alpha}n^{-1+\frac{\alpha}{2}} \right)^{\frac{1-\alpha}{2-\alpha}}  \right) \Bigg].
	\end{split}
	\end{align*}

	By using l'H\^{o}pital's rule, one can see that
	\begin{align*}
	&\lim_{n \to \infty} 2^{2-\alpha}  n^{1-\frac{\alpha}{2}} \left(1- \left( 1 - (2-\alpha) 2^{1 - \frac{\alpha}{2}} \sqrt{-\ln r}2^{-2+\alpha}n^{-1+\frac{\alpha}{2}} \right)^{\frac{1-\alpha}{2-\alpha}}  \right) \\
	& = (1-\alpha) 2^{ 1- \frac{\alpha}{2}} \sqrt{-\ln r}
	\end{align*}
	and then conclude by using l'H\^{o}pital's rule again that
	\begin{align*}
	\lim_{n \to \infty} I(n) = \frac{2}{1-\alpha} \lim_{n \to \infty}\widetilde{I}(n) = -\ln r.
	\end{align*}
	The conclusion of this Theorem then follows the same arguments as before.
\subsection{Case $\alpha=1$}
 First, we verify \eqref{eq:NC3}. Observe that, for $x\in[0,r_n-n)$, $B_n\setminus B_n(x)=\emptyset$. Besides, for $x \in [r_n-n,n]$, we have
	\begin{align*}
	\int_{B_n \setminus B_{r_n}(x)} |y-x|^{-\alpha} dy & \leq r_n^{-\alpha}(x-r_n+n) \leq r_n^{-\alpha}(2n-r_n). 
	\end{align*}
	Hence,
	\begin{align*}
	\max_{x\in [0,n]}\int_{B_n \setminus B_{r_n}(x)} |y-x|^{-\alpha} dy & \leq 2r_n^{-\alpha}(2n-r_n)\underset{n\to\infty}{\longrightarrow} 0,
	\end{align*}
	which proves \eqref{eq:NC3}.
	
	For $n$ large, it holds that
	\begin{align*}
	I(n)&=2\int_{0}^n \int_{B_n \setminus B_{r_n}(x)} |y-x|^{-1} dydx = 2\int_{-n+r_n}^{n} \int_{x-r_n}^{-n} |y-x|^{-1} dy dx \\
	& = 2\int_{-n+r_n}^{n} \int_{r_n}^{n+x} s^{-1} ds dx = 2\int_{-n+r_n }^{n} \ln (n+x) - \ln r_n dx\\
	&=2\left( 2n\ln(2n)-n-r_n\ln(r_n)-n+r_n-(2n-r_n)\ln r_n\right)\\
	&= -4n \ln  \left( \frac{r_n}{2n}\right)  +2r_n -4n= - \sqrt{n} \ln g_r + 4n \left( g_r^{\frac{1}{4\sqrt{n}}} -1 \right)\\ 
	&= \frac{ 4\sqrt{n} \left( g_r^{\frac{1}{4\sqrt{n}}} -1 \right) - \ln g_r}{n^{-\frac{1}{2}}}.
	\end{align*}
	Since
	$$4\sqrt{n} \left( g_r^{\frac{1}{4\sqrt{n}}} -1 \right) \underset{n\to\infty}{\longrightarrow} \ln g_r,$$
	we use once again l'H\^{o}pital's rule to check that
	\begin{align*}
	\lim_{n\to\infty} I(n)	&= \lim_{n \to \infty} \frac{ 4\sqrt{n} \left( g_r^{\frac{1}{4\sqrt{n}}} -1 \right) - \ln g_r}{n^{-\frac{1}{2}}}
	= \frac{1}{8}  (\ln g_r)^2= -\ln r.
	\end{align*}
	Thus, from \eqref{eq:NC3} and Lemma \ref{a1} we get
	$$ \lim_{n \to \infty} \E\left[W(n,r_n)\right] = -\frac12 \ln r.$$
	Finally, Lemma \ref{pr:continuousCase} gives us
		\begin{align*}
	& d_{\text{TV}}\left(W(n,r_n),P(n)\right) \\
		& \leq 2\left( \max_{x\in B_n} \int_{B_n\setminus B_{r_n}(x)}g(y-x)d y\right)  \min(1,\E\left[P(n)\right]^{-1}) \int_{B_n} \int_{B_n\setminus B_{r_n}(x)}g(y-x)d yd x \\
	& \leq C' \left(\frac{2n}{r_n}-1\right) = C' \left( \exp\left( \sqrt{-\frac{1}{2n} \ln r} \right) -1\right) \leq C n^{-\frac12}.
	\end{align*}
\subsection{Case $0<\alpha<1$}

	We begin by verifying \eqref{pr:continuousCase}. Indeed
	\begin{align*}
	\max_{x\in[0,n]} \int_{B_n \setminus B_{r_n}(x)} |y-x|^{-\alpha} dy  &\leq 2\max_{x \in [r_n-n,n]} \int_{r_n \wedge (n+x)}^{n+x} s^{-\alpha} ds \\&\leq \max_{x \in [r_n-n,n]} \frac{2}{1-\alpha} \left( (n+x)^{1-\alpha} - r_n^{1-\alpha} \right) \\
	& = \frac{2}{1-\alpha} \left( (2n)^{1-\alpha} - r_n^{1-\alpha} \right) = 2^{ 1- \frac{\alpha}{2}} \sqrt{-\ln r} n^{-\frac{\alpha}{2}}\\ 
	& \underset{n\to\infty}{\longrightarrow} 0.
	\end{align*}
	Now we prove that
	$$ \lim_{n \to \infty} \E\left[W(n,r_n)\right] = -\frac12 \ln r.$$
	The calculations are very close to the ones in the case $1<\alpha<2$. Equation \eqref{eq:tile{I}} is satisfied here with
	\begin{align*} \label{two}
	\begin{split}
	\widetilde{I}(n) & = n^{1-\frac{\alpha}{2}} (2-\alpha) 2^{1-\frac{\alpha}{2}} \sqrt{-\ln r} \\
	&\qquad + \left( (2n)^{1-\alpha} - (1-\alpha) 2^{ - \frac{\alpha}{2}} \sqrt{-\ln r} n^{-\frac{\alpha}{2}} \right)^{\frac{2-\alpha}{1-\alpha}} -(2n)^{2-\alpha} \\
	& = n^{1-\frac{\alpha}{2}} \Bigg[ (2-\alpha) 2^{1-\frac{\alpha}{2}} \sqrt{-\ln r} \\
	& \qquad\qquad+ 2^{2-\alpha} n^{1-\frac{\alpha}{2}}\left( \left( 1 - (1-\alpha) 2^{ 1- \frac{\alpha}{2}} \sqrt{-\ln r} 2^{-2+\alpha}n^{-1+\frac{\alpha}{2}} \right)^{\frac{2-\alpha}{1-\alpha}} -1 \right) \Bigg].
	\end{split}
	\end{align*}
	From here one can conclude by using l'H\^opital's rule twice again. Finally, the first assertion follows from similar calculations as in the case $\alpha=1$. This concludes the proof.

\subsection*{Acknowledgement} The IMB receives support from the EIPHI Graduate
School (contract ANR-17-EURE-0002).
\bibliographystyle{plain}
\bibliography{lit}

\end{document}